\numberwithin{equation}{section}
\theoremstyle{plain}
\newtheorem{theorem}{Theorem}[section]
\newtheorem{lemma}[theorem]{Lemma}
\newtheorem{proposition}[theorem]{Proposition}
\newtheorem*{mtheorem}{Main Theorem}
\theoremstyle{definition}
\newtheorem{definition}[theorem]{Definition}
\newtheorem{example}[theorem]{Example}
\newtheorem{remark}[theorem]{Remark}
\newtheorem{open.problem}[theorem]{Open Problem}
\newcommand{\Leb}[1]{\mathscr{L}^{#1}} 
\newcommand{\N}{\mathbb{N}}
\newcommand{\R}{\mathbb{R}}
\title[Asymptotics of the $s$-fractional Gaussian perimeter as $s\to 0^+$]
{Asymptotics of the $s$-fractional \\ Gaussian perimeter as $s\to 0^+$}
\author[A.\ Carbotti]{Alessandro Carbotti}
\address{Dipartimento di Matematica
	e Fisica ``E. De Giorgi'', Universit\`a del Salento,
	Via Per Arnesano, 73100 Lecce, Italy.}
\email{alessandro.carbotti@unisalento.it}
\author[S.\ Cito]{Simone Cito}
\address{Dipartimento di Matematica
	e Fisica ``E. De Giorgi'', Universit\`a del Salento,
	Via Per Arnesano, 73100 Lecce, Italy.}
\email{simone.cito@unisalento.it}
\author[D. A. \ La Manna]{Domenico Angelo La Manna}
\address{Department of Mathematics and Statistics, P.O.\ Box 35 (MaD), FI-40014, University of Jyv\"askyl\"a, Finland.}
\email{domenico.a.lamanna@jyu.fi}
\author[D.\ Pallara]{Diego Pallara}
\address{Dipartimento di Matematica
	e Fisica ``E. De Giorgi'', Universit\`a del Salento, and INFN, Sezione di Lecce,
	Via Per Arnesano, 73100 Lecce, Italy.}
\email{diego.pallara@unisalento.it}
\date{\today}  \linespread{1.2}
\keywords{Fractional Ornstein-Uhlenbeck operator, Fractional Perimeters, Fractional Sobolev Spaces, Gaussian analysis}
\subjclass[2010]{35R11, 45K05, 49Q20}
\begin{document}
	\begin{abstract}
We study the asymptotic behaviour of the renormalised $s$-fractional Gaussian peri\-meter 
of a set $E$ inside a domain $\Omega$ as $s\to 0^+$. Contrary to the Euclidean case, as the Gaussian
measure is finite, the shape of the set at infinity does not matter, but, surprisingly, the limit 
set function is never additive.
\end{abstract}
	
	\maketitle
	
	\tableofcontents

\section{Introduction}\label{sec:intro}

In this paper we consider the fractional Gaussian perimeter
\begin{equation}
\label{eq:fracgaussperimeter}
\begin{split}
P^\gamma_s(E;\Omega):=&\int_{E\cap\Omega}d\gamma(x)\int_{E^c\cap\Omega}K_s(x,y)d\gamma(y) 
\\
&+\int_{E\cap\Omega}d\gamma(x)\int_{E^c\cap\Omega^c}K_s(x,y)d\gamma(y)
+\int_{E\cap\Omega^c}d\gamma(x)\int_{E^c\cap\Omega}K_s(x,y)d\gamma(y), 
\end{split}
\end{equation}
where $\gamma$ is the standard Gaussian measure in $\R^N$ defined in \eqref{eq:gaussianmeasure} and the kernel $K_s$ is the jumping kernel 
defined in \eqref{defK_s} and study the asymptotics of $sP^\gamma_s(E;\Omega)$ as $s\to 0^+$. In this sense 
this is a parallel study of our previous paper \cite{CaCiLaPa2}, where the $\Gamma$-limit of 
$(1-s)P^\gamma_s(E;\Omega)$ as $s\to 1^-$ is studied. 

In the Euclidean setting the notion of $s$-fractional perimeter recovers the classical perimeter 
when $s\to 1^-$ in various senses as proved in \cite{AmDeMa, BouBreMir, CafVal, davila,
lombardini, ponce}. On the other side when $s\to 0^+$ one may wonder if there is convergence to some 
measure related to the Lebesgue one, and actually it holds true when considering the fractional perimeter 
of a set in the whole space (see \cite{MazSha}), but in a domain $\Omega$ the limit of $sP^\gamma_s(E;\Omega)$ does not always exist, 
and when it does, as a function of the set $E$ it is not always a measure as proved in \cite{DiFiPaVa}.

The main result of this paper consists in the computation of the limit
\begin{equation}\label{eq:deflim}
\mu(E):=\lim_{s\to 0^+}sP^\gamma_s(E;\Omega)
\end{equation}
and the analysis of the set function $\mu$. 
The Gaussian case is different from the Euclidean case treated in \cite{DiFiPaVa}. Indeed, the limit in 
\eqref{eq:deflim} always exists under the only assumption that $P^\gamma_{s_0}(E;\Omega)<\infty$ 
for some $s_0\in(0,1)$ and it is not affected neither by the behaviour at infinity of the set $E$ 
nor on the unboundedness and $C^{1,\alpha}$ regularity of $\Omega$. Nevertheless, in the limit 
cases $E\subset\Omega$ or $\Omega=\R^N$ we dot not recover at the limit the Gaussian measure 
of $E$, but rather $2\gamma(E)\gamma(E^c)$, a result that is is coherent with the fact that, 
whenever it exists, $\mu(E)=\mu(E^c)$. A related result in the Euclidean setting is the Maz'ya-Shaposhnikova 
approximation theorem proved in \cite{MazSha} in the framework of fractional Sobolev spaces 
$W^{s,p}(\R^N)$. We prove in Theorem \ref{teo:mazya} an analogous result in the Gaussian case, $p=2$. 
Our result is intrinsecally different with respect to its Euclidean counterpart concerning both 
the methods and the result, since in the Gaussian case the Ornstein-Uhlenbeck operator has 
compact resolvent (hence we can use a series expansion) 
and the constants are eigenfunctions relative to the 0 eigenvalue. 

In the following, we denote by $\mathcal{E}$ the family of sets $E\subset\R^N$ such that the limit in \eqref{eq:deflim} exists which is defined as
\begin{equation*}
\mathcal{E}:=\left\{E\in\mathcal{M}(\R^N)\quad\text{s.t.}\quad\exists s_0\in(0,1)\quad\text{s.t.}\quad P^\gamma_{s_0}(E;\Omega)<\infty \right\}.
\end{equation*}
We stress that, differently from \cite{DiFiPaVa}, we do not need to complement $\mathcal{E}$ with a control of the behaviour at infinity of its elements.
Let us state the main result of the present paper. 

\begin{mtheorem}\label{teo:main}
Let $\Omega\subset\R^N$ an open connected set with Lipschitz boundary. Then for any $E\subset\R^N$ 
measurable set such that $P^\gamma_{s_0}(E;\Omega)<\infty$ for some $s_0\in(0,1)$ the limit 
\eqref{eq:deflim} exists and it holds
\begin{equation}
\label{eq:claimlim}
\mu(E)=2\left(\gamma(E)\gamma(\Omega\setminus E)+\gamma(E\cap\Omega)\gamma(E^c\cap\Omega^c)\right).
\end{equation}
\end{mtheorem}

In Section \ref{sec:notprel} we introduce the main tools and definitions. In Section 
\ref{sec:mainresults} we firstly prove our Main Theorem by stating and proving the ancillary 
Propositions \ref{prop:limsup} and \ref{prop:liminf} and we show some properties 
of the limit set function $\mu$. In the last Section \ref{sec:finalremarks} we prove that for the Gaussian 
fractional perimeter defined and used in \cite{DL} the asymptotics for $s\to 0^+$ is trivial. 

\section{Notation and preliminary results}\label{sec:notprel}
For $N\in\N$ we denote by $\gamma$ the Gaussian measure on $\R^N$
\begin{equation}
\label{eq:gaussianmeasure}
\gamma:=\frac{1}{(2\pi)^{N/2}}e^{-\frac{|\cdot|^2}{2}}\Leb{N},
\end{equation}
where $\Leb{N}$ is the Lebesgue measure. With a little abuse of notation we denote by 
$\gamma$ both the measure and its density with respect to $\Leb{N}$. Moreover, in the sequel we use the measure $\lambda:=\frac{1}{(2\pi)^{N/2}}e^{-\frac{|\cdot|^2}{4}}\Leb{N}$.

In order to define the fractional perimeter, we introduce the Ornstein-Uhlenbeck 
semigroup, its generator $\Delta_\gamma$, the fractional powers of the generator and the functional setting.

\begin{definition}
	Let $t>0$ and $x\in\R^N$. For $u\in L^1_\gamma(\R^N)$ we define the Ornstein-Uhlenbeck semigroup as
	$$
	e^{t\Delta_\gamma}u(x):=\int_{\R^N}M_t(x,y)u(y)d\gamma(y)
	$$
	where $M_t(x,y)$ denotes the Mehler kernel
	$$
	M_t(x,y):=\frac{1}{(1-e^{-2t})^{N/2}}
	\exp\left(-\frac{e^{-2t}|x|^2-2e^{-t}x\cdot y+e^{-2t}|y|^2}{2(1-e^{-2t})}\right),
	$$
	which satisfies
	$$
	e^{t\Delta_\gamma}1=\int_{\R^N}M_t(x,y)d\gamma(y)=1,
	$$
	for any $t>0$ and any $x\in\R^N$.
	
	The generator of $e^{t\Delta_\gamma}$ acts on sufficiently smooth functions as
	\[
	\Delta_\gamma u = \Delta u - x \cdot Du
	\]
	and is called Ornstein-Uhlenbeck operator; see e.g. \cite{LunMetPal} and the references therein 
	for the main properties of $e^{t\Delta_\gamma}$ and $\Delta_\gamma$.

Since $-\Delta_\gamma$ is a positive definite and selfadjoint operator which generates a $C_0$-semigroup 
of contractions in $L^2_\gamma(\R^N)$, we can define its fractional powers by means of spectral 
decomposition via the Bochner subordination formula. 
In particular, for $s\in(0,1)$ and $x\in\R^N$ the fractional Ornstein-Uhlenbeck operator is defined as
	\begin{equation}
	\label{eq:bochnersubordination}
	\begin{split}
	(-\Delta_\gamma)^su(x):&=\frac{1}{\Gamma(-s)}\int_0^{\infty}\frac{e^{t\Delta_\gamma} u(x)-u(x)}
	{t^{s+1}}dt
	\\
	&=\frac{1}{\Gamma(-s)}\int_0^{\infty}\frac{dt}{t^{s+1}}\int_{\R^N}M_t(x,y)(u(y)-u(x))d\gamma(y) \\
	&=\frac{1}{|\Gamma(-s)|}\int_{\R^N}\left(u(x)-u(y)\right)K_{2s}(x,y)d\gamma(y),
	\end{split}
	\end{equation}
	where for $\sigma>0$ we have set 
	\begin{equation}\label{defK_s}
	K_\sigma(x,y):=\int_0^{\infty}\frac{M_t(x,y)}{t^{\frac \sigma2+1}}\, dt,
	\end{equation}
and the right-hand side in \eqref{eq:bochnersubordination} has to be intended in the Cauchy principal 
value sense.
Notice that the integrability of the function 
$$
(0,\infty)\ni t\mapsto \frac{M_t(x,y)}{t^{\frac{sp}{2}+1}}
$$
near zero, for any $x,y\in\R^N$, $x\neq y$, is ensured by the fact that
\begin{equation}
\label{eq:kernelquotient}
\lim_{t\to 0^+}\frac{M_t(x,y)}{H_t(|x-y|)}=(2\pi)^{N/2}e^{\frac{|x|^2}{4}}e^{\frac{|y|^2}{4}}
\quad\text{for any}\quad x,y\in\R^N,
\end{equation}
where, for $r\ge 0$, $H_t$ is the Gauss-Weierstrass kernel $H_t(r):=\frac{e^{-\frac{r^2}{4t}}}{(4\pi t)^{N/2}}$.
\end{definition}


\begin{definition}
	\label{def:gaussfracsobspace}
	Let $s\in(0,1)$ and $1\le p<\infty$. We define the fractional 
	Gaussian Sobolev space $W^{s,p}_\gamma(\R^N)$ as
	$$
	W^{s,p}_\gamma(\R^N):=\left\{u\in L^p_\gamma(\R^N);\ [u]_{W^{s,p}_\gamma(\R^N)}<\infty\right\},
	$$
	where
	$$
	[u]_{W^{s,p}_\gamma(\R^N)}:=\left(\int_{\R^N} d\gamma(x)\int_{\R^N}|u(x)-u(y)|^p
	K_{sp}(x,y)d\gamma(y)\right)^{1/p},
	$$
	and $K_{sp}$ is defined in \eqref{defK_s} with $\sigma=sp$. When $p=2$, as usual we use the notation $H^s_\gamma(\R^N)$ instead of $W^{s,2}_\gamma(\R^N)$.
\end{definition}

For the sake of completeness we recall that the Gaussian perimeter of a measurable set $E$ in a Lipschitz open connected set $\Omega$ is defined by
\begin{equation}
\label{eq:gaussper}
P^\gamma(E;\Omega)=\sup\left\{\int_E\left(\text{div}\,\varphi-\varphi\cdot x\right)\:d\gamma(x):
\varphi\in C^\infty_c(\Omega;\R^N),\ \|\varphi\|_\infty\le 1\right\}.
\end{equation}

Now, we make more precise the definition of Gaussian fractional perimeter 
\eqref{eq:fracgaussperimeter} given in Section \ref{sec:intro}.

\begin{definition}
	Let $\Omega\subset\R^N$ be a connected open set with Lipschitz boundary, and $E\subset\R^N$ a 
	measurable set. We define the Gaussian $s$-perimeter of $E$ in $\Omega$ as
	$$
	P^\gamma_s(E;\Omega):=P^{\gamma,L}_s(E;\Omega)+P^{\gamma,NL}_s(E;\Omega),
	$$
	where the {\em local part} is 
	\begin{equation*}
	P^{\gamma,L}_s(E;\Omega):=\int_{E\cap\Omega}d\gamma(x)\int_{E^c\cap\Omega}K_s(x,y)d\gamma(y),
	\end{equation*}
	and the {\em nonlocal part} is 
	\begin{equation*}
	P^{\gamma,NL}_s(E;\Omega):=\int_{E\cap\Omega}d\gamma(x)\int_{E^c\cap\Omega^c}K_s(x,y)d\gamma(y)+
	\int_{E\cap\Omega^c}d\gamma(x)\int_{E^c\cap\Omega}K_s(x,y)d\gamma(y).
	\end{equation*}
	Using the symmetry of the kernel $K_s$ we immediately notice that $P^\gamma_s(E^c;\Omega)=P^\gamma_s(E;\Omega)$ for any measurable set $E$.
	If $\Omega=\R^N$ we simply write $P^\gamma_s(E)$ instead of $P^\gamma_s(E;\R^N)$. We notice that if $E\subset\Omega$ or $E^c\subset\Omega$ we have that $P^\gamma_s(E;\Omega)=P^\gamma_s(E)$.
\end{definition}

In the sequel, for $A, B$ measurable and disjoint sets, we denote with $L^\gamma_s(A,B)$ the ($s$-Gaussian) interaction functional
\begin{equation}
\label{eq:interaction}
L^\gamma_s(A,B):=\int_Ad\gamma(x)\int_BK_s(x,y)d\gamma(y).
\end{equation}
Using this notation we have 
$$
P^\gamma_s(E;\Omega)=L^\gamma_s(E\cap\Omega,E^c\cap\Omega)+L^\gamma_s(E\cap\Omega,E^c\cap\Omega^c)
+L^\gamma_s(E\cap\Omega^c,E^c\cap\Omega).
$$
It is useful the following integration by parts formula proved for instance in \cite{CaCiLaPa2}
\begin{equation}
\label{eq:intbyparts}
\frac 12[u]^2_{H^s_\gamma(\R^N)}=\int_{\R^N}u(-\Delta_\gamma)^s u\:d\gamma.
\end{equation}
The kernel $K_s$ satisfies the following estimates (see \cite[Lemmas 2.8, 2.9]{CaCiLaPa2}).

\begin{lemma}
	For any $x,y\in\R^N$ and for any $s\in(0,1)$ we have
		\begin{equation}
		\label{eq:fractkernellowerbound}
		K_s(x,y)\ge\frac{C_{N,s}}{|x-y|^{N+s}},
		\end{equation}
		where $C_{N,s}:=2^{s+\frac N2}\Gamma\left(\frac{s+N}{2}\right)$, and 
		\begin{equation}
		\label{eq:upperadialestimate}
		K_s(x,y)\le e^{\frac{|x|^2}{4}}e^{\frac{|y|^2}{4}}\tilde{K}_s(|x-y|),
		\end{equation}
		where, for any $r\ge 0$, $\tilde{K}_s$ denotes the decreasing kernel
		$$
		\tilde{K}_s(r):=\int_0^{\infty} \exp\left(-\frac{e^tr^2}{2(e^{2t}-1)}\right)
		\frac{dt}{t^{\frac s2+1}(1-e^{-2t})^{N/2}}.
		$$
\end{lemma}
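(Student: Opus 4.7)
The plan is a single algebraic reduction of the Mehler exponent that isolates the $|x-y|$-dependence from a Gaussian weight in $|x|^2+|y|^2$; both bounds then fall out after integrating in $t$.

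First I would set $a:=e^{-t}\in(0,1)$ and rewrite the numerator in the exponent of $M_t$ as
$$a^2|x|^2 - 2a\,x\cdot y + a^2|y|^2 \;=\; a|x-y|^2 - a(1-a)(|x|^2+|y|^2),$$
and then use $\tfrac{a(1-a)}{2(1-a^2)}=\tfrac{a}{2(1+a)}$ to factor
$$M_t(x,y) \;=\; \frac{1}{(1-e^{-2t})^{N/2}}\exp\!\left(-\frac{e^t|x-y|^2}{2(e^{2t}-1)}\right)\exp\!\left(\frac{e^{-t}(|x|^2+|y|^2)}{2(1+e^{-t})}\right).$$
This identity is the key step: it splits $M_t$ into a translation-invariant factor (of Gauss--Weierstrass type) times a manifestly non-negative Gaussian weight.

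For the upper bound, the map $a\mapsto a/(2(1+a))$ is increasing on $(0,1)$ with supremum $1/4$, so the Gaussian factor is dominated by $e^{(|x|^2+|y|^2)/4}=e^{|x|^2/4}e^{|y|^2/4}$. Inserting this estimate into $K_s(x,y)=\int_0^\infty t^{-s/2-1}M_t(x,y)\,dt$ and pulling the $x,y$-weights outside the $t$-integral produces exactly $e^{|x|^2/4}e^{|y|^2/4}\,\tilde K_s(|x-y|)$; the monotonicity of $\tilde K_s$ in $r$ is immediate from the pointwise-in-$t$ monotonicity of the integrand.

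For the lower bound I would discard the Gaussian factor (which is $\geq 1$) and compare the remaining factor with the Gauss--Weierstrass kernel $H_t(r)$ at $r=|x-y|$. Using $1-e^{-2t}\leq 2t$ one has $\bigl(4\pi t/(1-e^{-2t})\bigr)^{N/2}\geq (2\pi)^{N/2}$, and a short computation gives
$$\frac{1}{4t}-\frac{e^t}{2(e^{2t}-1)} \;=\; \frac{\sinh t - t}{4t\sinh t} \;\geq\; 0,$$
so that $M_t(x,y)\geq (2\pi)^{N/2}H_t(|x-y|)$ for every $t>0$, consistently with the pointwise limit \eqref{eq:kernelquotient}. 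Integrating against $t^{-s/2-1}\,dt$ and evaluating the resulting Bochner-type integral by the substitution $u=|x-y|^2/(4t)$ yields
$$\int_0^\infty t^{-s/2-1}H_t(r)\,dt \;=\; \frac{2^{s}\,\Gamma\!\bigl((N+s)/2\bigr)}{\pi^{N/2}\,r^{N+s}},$$
and multiplying by $(2\pi)^{N/2}$ gives precisely $C_{N,s}/|x-y|^{N+s}$.

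The only real obstacle is the completion-of-square identity above; once that is in place, everything reduces to the elementary inequalities $\sinh t\geq t$, $1-e^{-2t}\leq 2t$, and the routine Gamma-function computation of a Riemann--Liouville integral of the heat kernel.
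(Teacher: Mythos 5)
Your proof is correct: the completion-of-square identity, the bound $e^{-t}/(2(1+e^{-t}))<1/4$, the comparison $M_t(x,y)\ge (2\pi)^{N/2}H_t(|x-y|)$ via $1-e^{-2t}\le 2t$ and $\frac{e^t}{2(e^{2t}-1)}=\frac{1}{4\sinh t}\le\frac{1}{4t}$, and the Gamma-integral evaluation all check out and reproduce exactly $\tilde K_s$ and $C_{N,s}=2^{s+\frac N2}\Gamma\bigl(\tfrac{s+N}{2}\bigr)$. The paper defers the proof to \cite[Lemmas 2.8, 2.9]{CaCiLaPa2}, and your argument is essentially that standard route (the very form of $\tilde K_s$ and of $C_{N,s}$ reflects the same splitting and heat-kernel comparison), so there is nothing to add.
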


\section{Main Results}
\label{sec:mainresults}

We begin this section by proving the analogous of \cite[Theorem 3]{MazSha} in the case $p=2$
in the Gaussian setting. Notice that our proof exploits the Hilbert structure of $H^s_\gamma(\R^N)$
and the compactness of the resolvent of $\Delta_\gamma$. For $p\ne 2$ the proof is more delicate and requires explicit estimates on the kernel joint with a Hardy-type inequality (see \cite[Subsection 2.1]{FraSei}).

\begin{theorem}[Maz'ya-Shaposhnikova approximation in $H^s_\gamma(\R^N)$]\label{teo:mazya}
	Let $s_0\in(0,1)$ and $u\in H^{s_0}_\gamma(\R^N)$. Then it holds that
	\begin{equation*}
	\lim_{s\to 0^+}s[u]^2_{H^s_\gamma(\R^N)}=2\left(\left\|u\right\|^2_{L^2_\gamma(\R^N)}-\left|\int_{\R^N}u\:d\gamma\right|^2\right).
	\end{equation*}
\end{theorem}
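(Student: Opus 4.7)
My plan is to diagonalise the Ornstein–Uhlenbeck operator in the Hermite basis of $L^2_\gamma(\R^N)$ and reduce the seminorm to an explicit series in the Fourier–Hermite coefficients of $u$, to which dominated convergence can then be applied. Let $\{H_\alpha\}_{\alpha\in\N^N}$ denote the orthonormal basis of $L^2_\gamma(\R^N)$ consisting of multi-dimensional Hermite polynomials, so that $-\Delta_\gamma H_\alpha=|\alpha|H_\alpha$ with $|\alpha|:=\alpha_1+\cdots+\alpha_N$, and $H_0\equiv 1$ spans the kernel. I will write $u=\sum_\alpha c_\alpha H_\alpha$ with $c_\alpha:=\int_{\R^N} u\,H_\alpha\,d\gamma$, so that $c_0=\int_{\R^N} u\,d\gamma$ and $\|u\|^2_{L^2_\gamma(\R^N)}=\sum_\alpha c_\alpha^2$.

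The core computation is a closed form for $[u]^2_{H^s_\gamma(\R^N)}$. The Mehler kernel admits the classical spectral expansion $M_t(x,y)=\sum_\alpha e^{-t|\alpha|}H_\alpha(x)H_\alpha(y)$, and together with $\int_{\R^N} M_t(x,\cdot)\,d\gamma=1$ this yields, after expanding $|u(x)-u(y)|^2$,
\begin{equation*}
\int_{\R^N}\int_{\R^N}|u(x)-u(y)|^2 M_t(x,y)\,d\gamma(x)\,d\gamma(y)=2\sum_{|\alpha|\geq 1}(1-e^{-t|\alpha|})\,c_\alpha^2.
\end{equation*}
Multiplying by $t^{-s-1}$, integrating in $t\in(0,\infty)$ (Tonelli is available since every integrand is nonnegative), recalling $K_{2s}(x,y)=\int_0^\infty M_t(x,y)\,t^{-s-1}\,dt$, and using the elementary identity $\int_0^\infty(1-e^{-t\lambda})\,t^{-s-1}\,dt=\lambda^s\,\Gamma(1-s)/s$ for $\lambda>0$, I will arrive at
\begin{equation*}
s\,[u]^2_{H^s_\gamma(\R^N)}=2\,\Gamma(1-s)\sum_{|\alpha|\geq 1}|\alpha|^s\,c_\alpha^2.
\end{equation*}

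The last step is to let $s\to 0^+$. Since $\Gamma(1-s)\to 1$, everything reduces to proving that $\sum_{|\alpha|\geq 1}|\alpha|^s c_\alpha^2\to\sum_{|\alpha|\geq 1}c_\alpha^2=\|u\|^2_{L^2_\gamma(\R^N)}-\left|\int_{\R^N} u\,d\gamma\right|^2$. The hypothesis $u\in H^{s_0}_\gamma(\R^N)$ is, by the same spectral identity at $s=s_0$, exactly the statement $\sum_{|\alpha|\geq 1}|\alpha|^{s_0}c_\alpha^2<\infty$, and for $s\in(0,s_0)$ and $|\alpha|\geq 1$ we have $|\alpha|^s\leq|\alpha|^{s_0}$. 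Dominated convergence on the counting measure then delivers the claim.

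The main obstacle — really the only delicate point — is the bookkeeping of the constant $\Gamma(1-s)/s$: it is precisely this divergence that converts the seemingly vanishing prefactor $s$ into the finite limit $\Gamma(1-s)\to 1$ in front of the series, so that one recovers the factor $2$ (rather than $0$) in the statement. The Fubini/Tonelli and spectral-expansion steps are clean because $M_t\geq 0$ and because the Hermite basis is orthonormal in $L^2_\gamma$; the compactness of the resolvent of $\Delta_\gamma$ mentioned in the paper is what guarantees the discrete spectrum and the eigenbasis on which this whole approach rests.
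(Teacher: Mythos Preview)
Your proof is correct and follows essentially the same route as the paper's: both diagonalise in the Hermite basis, arrive at the identical spectral identity $s[u]^2_{H^s_\gamma(\R^N)}=2\Gamma(1-s)\sum_{|\alpha|\ge 1}|\alpha|^s c_\alpha^2$, and conclude by dominated convergence using the summability of $|\alpha|^{s_0}c_\alpha^2$. The only cosmetic difference is that the paper obtains this identity by quoting the integration-by-parts formula \eqref{eq:intbyparts} together with the spectral mapping theorem for $(-\Delta_\gamma)^s$, whereas you derive it directly from the Mehler expansion and the Gamma-integral $\int_0^\infty(1-e^{-\lambda t})t^{-s-1}\,dt=\lambda^s\Gamma(1-s)/s$.
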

\begin{proof}
Let us notice that since $u\in L^2_\gamma(\R^N)$, we can write it in terms of the orthonormal basis of eigenfunctions $\mathcal{B}$ of $(-\Delta_\gamma)^s$ given by Hermite polynomials (see for instance \cite{ErMaObTr}), i.e. $\mathcal{B}=\{H_n\}_{n\in\N_0}$, with $H_0\equiv1$ on $\R^N$. We recall that on the whole of $\R^N$ the spectral fractional Ornstein-Uhlenbeck operator coincides with the integro-differential operator in \eqref{eq:bochnersubordination}, and so, by the spectral mapping Theorem (see e.g. \cite[Theorem 5.3.1]{MarSan}) the latter has discrete spectrum given by $\sigma((-\Delta_\gamma)^s)=\sigma((-\Delta_\gamma))^s=\{n^s\}_{n\in\N_0}.$

With these ideas in mind we have that
$$
u=\sum_{n=0}^\infty(u,H_n)H_n,
$$
$$
(-\Delta_\gamma)^su=|\Gamma(-s)|\sum_{n=1}^\infty n^s(u,H_n)H_n,
$$
By using the integration by parts formula \eqref{eq:intbyparts}
\begin{equation}
\label{eq:mazshap}
s[u]^2_{H^s_\gamma(\R^N)}=2s\int_{\R^N}u(-\Delta_\gamma)^sud\gamma=2\Gamma(1-s)\sum_{n=1}^\infty n^s|(u,H_n)|^2,
\end{equation}
where the right-hand side in \eqref{eq:mazshap} is finite for any $s\in(0,s_0)$ thanks to the assumption $u\in H^{s_0}_\gamma(\R^N)$. Passing to the limit for $s\to 0^+$ in \eqref{eq:mazshap} we have
\begin{align*}
\lim_{s\to 0^+}s[u]^2_{H^s_\gamma(\R^N)}&=2\sum_{n=1}^\infty|(u,H_n)|^2\\
&=2\left[\left(\sum_{n=0}^\infty|(u,H_n)|^2\right)-|(u,H_0)|^2\right]=2\left(\|u\|^2_{L^2_\gamma(\R^N)}-\left|\int_{\R^N}u\:d\gamma\right|^2\right),
\end{align*}
concluding the proof.
	\end{proof}

\begin{remark}
We point out that Theorem \ref{teo:mazya} is sufficient to prove our Main Theorem when $\Omega=\R^N$. Indeed, by choosing $u=\chi_E$, where $E$ is a measurable set with $P^\gamma_{s_0}(E)<\infty$ for some $s_0\in(0,1)$, we get
\begin{align*}
\lim_{s\to 0^+}sP^\gamma_{s}(E)&=\lim_{s\to 0^+}\frac{s}{2}[\chi_E]^2_{H^{\frac s2}_\gamma(\R^N)}=\lim_{\sigma\to 0^+}\sigma[\chi_E]^2_{H^{\sigma}_\gamma(\R^N)}=2\left(\gamma(E)-\gamma(E)^2\right)\\
&=2\gamma(E)(1-\gamma(E))=2\gamma(E)\gamma(E^c).
\end{align*}
\end{remark}

The remaining part of the section is devoted to the proof of our Main Theorem in the general case.

\begin{proposition}\label{prop:limsup}
Let $\Omega\subset\R^N$ an open connected set with Lipschitz boundary and let  $E\subset\R^N$ be measurable. If $P^\gamma_{s_0}(E;\Omega)<\infty$ for some $s_0\in(0,1)$ then 
\begin{equation}
\label{eq:claimlimsup}
\limsup_{s\to 0^+}s P^\gamma_s(E;\Omega)\le 2
\bigl[\gamma(E)\gamma(\Omega\setminus E)+\gamma(E\cap\Omega)\gamma(E^c\cap\Omega^c)\bigr].
\end{equation}
\end{proposition}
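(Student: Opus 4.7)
The plan is to prove the stronger statement that the limit $\lim_{s\to 0^+}sP^\gamma_s(E;\Omega)$ actually exists and equals the right-hand side of \eqref{eq:claimlimsup}. Start from the decomposition
$$
P^\gamma_s(E;\Omega)=L^\gamma_s(E\cap\Omega,E^c\cap\Omega)+L^\gamma_s(E\cap\Omega,E^c\cap\Omega^c)+L^\gamma_s(E\cap\Omega^c,E^c\cap\Omega),
$$
and split the Bochner kernel \eqref{defK_s} at $t=1$: set $K_s^\flat(x,y):=\int_0^1 M_t(x,y)t^{-s/2-1}\,dt$ and $K_s^\sharp(x,y):=\int_1^\infty M_t(x,y)t^{-s/2-1}\,dt$, so that each interaction functional splits correspondingly as $L^\gamma_s=L^{\gamma,\flat}_s+L^{\gamma,\sharp}_s$.

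For the short-time part, since $t^{-s/2-1}\le t^{-s_0/2-1}$ whenever $0<s<s_0$ and $0<t<1$, one has the pointwise bound $K_s^\flat(x,y)\le K_{s_0}(x,y)$. Hence each of the three short-time interactions is bounded by $L^\gamma_{s_0}(\cdot,\cdot)\le P^\gamma_{s_0}(E;\Omega)<\infty$, and the factor $s$ makes this contribution vanish in the limit.

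The long-time part is where the content lies. The substitution $u=t^{-s/2}$ produces the exact identity
$$
sK_s^\sharp(x,y)=2\int_0^1 M_{u^{-2/s}}(x,y)\,du,
$$
so by Fubini
$$
sL^{\gamma,\sharp}_s(A,B)=2\int_0^1\left(\int_A\int_B M_{u^{-2/s}}(x,y)\,d\gamma(y)\,d\gamma(x)\right)du.
$$
For each fixed $u\in(0,1)$ the parameter $t:=u^{-2/s}$ tends to $\infty$ as $s\to 0^+$, and the inner integral equals $\int_A e^{t\Delta_\gamma}\chi_B\,d\gamma$. Ergodicity of the Ornstein--Uhlenbeck semigroup gives the pointwise limit $e^{t\Delta_\gamma}\chi_B(x)\to\gamma(B)$, and together with the Markov bound $0\le e^{t\Delta_\gamma}\chi_B\le 1$ and the finiteness of $\gamma$, dominated convergence on $(A,d\gamma)$ yields $\int_A e^{t\Delta_\gamma}\chi_B\,d\gamma\to\gamma(A)\gamma(B)$. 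Since this quantity is bounded by $1$ uniformly in $s$, a further dominated convergence in the $u$-integration produces $\lim_{s\to 0^+}sL^{\gamma,\sharp}_s(A,B)=2\gamma(A)\gamma(B)$.

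Summing the three long-time contributions and using the algebraic identity
$$
\gamma(E\cap\Omega)\gamma(E^c\cap\Omega)+\gamma(E\cap\Omega^c)\gamma(E^c\cap\Omega)=\gamma(E^c\cap\Omega)\gamma(E)=\gamma(E)\gamma(\Omega\setminus E)
$$
delivers $2[\gamma(E)\gamma(\Omega\setminus E)+\gamma(E\cap\Omega)\gamma(E^c\cap\Omega^c)]$, which is \eqref{eq:claimlimsup} with equality. The principal delicacy is the repeated use of dominated convergence in the long-time piece; that this goes through so cleanly—without any hypothesis on the behaviour of $E$ or $\Omega$ at infinity—is due to the finiteness of $\gamma$ coupled with the Markovian $L^\infty$ bound on $e^{t\Delta_\gamma}$, which is precisely the qualitative difference from the Euclidean setting of \cite{DiFiPaVa}.
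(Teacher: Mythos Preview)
Your argument is correct and in fact proves more than the stated proposition: it establishes the existence of the limit and its value directly, thereby yielding the Main Theorem itself and rendering the separate liminf estimate (Proposition~\ref{prop:liminf}) unnecessary.

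Your route differs genuinely from the paper's. The paper handles the long-time integral $\int_1^\infty$ by a further subdivision at $t=1/s$ and by hand-estimates on the Mehler kernel (equations \eqref{eq:stimapezzogrande}--\eqref{eq:stimadaunosuesseainf}): on $[1,1/s]$ the kernel is bounded by an $s$-independent integrable function and the length factor $1-s^{s/2}$ vanishes, while on $[1/s,\infty)$ one picks up the factor $s^{s/2}\to 1$ and the kernel is estimated by a function converging to $\gamma(x)\gamma(y)$. This produces only a $\limsup$ bound, and the matching $\liminf$ is obtained separately by restricting to a large ball and estimating $K_s$ from below. By contrast, your substitution $u=t^{-s/2}$ turns $s\int_1^\infty M_t\,t^{-s/2-1}dt$ into the exact identity $2\int_0^1 M_{u^{-2/s}}\,du$, after which ergodicity $e^{t\Delta_\gamma}\chi_B\to\gamma(B)$, the Markov bound $0\le e^{t\Delta_\gamma}\chi_B\le 1$, and two layers of dominated convergence give the limit $2\gamma(A)\gamma(B)$ at once. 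The paper's approach has the virtue of being entirely self-contained in terms of explicit kernel inequalities, but yours is shorter, conceptually cleaner, and exploits the structural feature (ergodicity of the OU semigroup with finite invariant measure) that ultimately distinguishes the Gaussian case from the Euclidean one in \cite{DiFiPaVa}.
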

\begin{proof}
We split
$$
K_s(x,y)=\int_0^1\frac{M_t(x,y)}{t^{\frac s2+1}}dt+\int_1^{\infty}\frac{M_t(x,y)}{t^{\frac s2+1}}dt.
$$
For the first term we have 
\begin{equation}
\label{eq:pezzopiccolo}
\int_0^1\frac{M_t(x,y)}{t^{\frac s2+1}}dt\le\int_0^1\frac{M_t(x,y)}{t^{\frac{s_0}{2}+1}}dt\le \int_0^{\infty}\frac{M_t(x,y)}{t^{\frac{s_0}{2}+1}}dt=K_{s_0}(x,y),
\end{equation}
for any $x,y\in\R^N$ and $s\le s_0$.
To handle the second term, we write
$$
M_t(x,y)=\frac{\exp\left(\phi_t(x,y)\right)}{(1-e^{-2t})^{N/2}}.
$$
and estimate 
\begin{equation}
\label{eq:stimapezzogrande}
\begin{split}
\exp\left(\phi_t(x,y)\right)\gamma(x)\gamma(y)&=\exp\left(-\frac{e^{-t}|x-y|^2+(|x|^2+|y|^2)(e^{-2t}-e^{-t})}{2(1-e^{-2t})}\right)\gamma(x)\gamma(y)\\
&=\exp\left(-\frac{e^{-t}|x-y|^2}{2(1-e^{-2t})}\right)\exp\left(-\frac{(|x|^2+|y|^2)(e^{-2t}-e^{-t})}{2(1-e^{-2t})}\right)\gamma(x)\gamma(y)\\
&\le\frac{1}{(2\pi)^N}\exp\left(-\frac{|x|^2+|y|^2}{2}\left(\frac{e^{-2t}-e^{-t}}{1-e^{-2t}}+1\right)\right)\\
&=\frac{1}{(2\pi)^N}\exp\left(-\frac{|x|^2+|y|^2}{2}\frac{1}{1+e^{-t}}\right),
\end{split}
\end{equation}
for any $t>0$ and $x,y\in\R^N$.
Now, we split again
$$
\int_1^{\infty}\frac{M_t(x,y)}{t^{\frac s2+1}}dt=\int_1^{1/s}\frac{M_t(x,y)}{t^{\frac s2+1}}dt+\int_{1/s}^{\infty}\frac{M_t(x,y)}{t^{\frac s2+1}}dt.
$$
Using \eqref{eq:stimapezzogrande} we have 
\begin{equation}
\label{eq:stimadaunoaunosuesse}
\begin{split}
s\gamma(x)\gamma(y)\int_1^{1/s}\frac{M_t(x,y)}{t^{\frac s2+1}}dt&\le\frac{s}{(2\pi)^N}\frac{1}{(1-e^{-2})^{N/2}}\exp\left(-\frac{|x|^2+|y|^2}{2}\frac{1}{1+e^{-1}}\right)\int_1^{1/s}\frac{dt}{t^{\frac s2+1}}\\
&=\frac{1}{(2\pi)^N}\frac{2}{(1-e^{-2})^{N/2}}\left(1-s^{s/2}\right)\exp\left(-\frac{|x|^2+|y|^2}{2}\frac{1}{1+e^{-1}}\right)
\end{split}
\end{equation}
and
\begin{equation}
\label{eq:stimadaunosuesseainf}
\begin{split}
s\gamma(x)\gamma(y)\int_{1/s}^{\infty}\frac{M_t(x,y)}{t^{\frac s2+1}}dt&\le\frac{s}{(2\pi)^N}\int_{1/s}^{\infty}\exp\left(-\frac{|x|^2+|y|^2}{2}\frac{1}{1+e^{-t}}\right)\frac{dt}{t^{\frac s2+1}(1-e^{-2t})^{N/2}} \\
&\le\frac{s}{(2\pi)^N}\frac{1}{(1-e^{-\frac 2s})^{N/2}}\exp\left(-\frac{|x|^2+|y|^2}{2}\frac{1}{1+e^{-\frac 1s}}\right)\int_{1/s}^{\infty}\frac{dt}{t^{\frac s2+1}}\\
&=\frac{1}{(2\pi)^N}\frac{2}{(1-e^{-\frac 2s})^{N/2}}\exp\left(-\frac{|x|^2+|y|^2}{2}\frac{1}{1+e^{-\frac{1}{s}}}\right)s^{s/2}.
\end{split}
\end{equation}
By using \eqref{eq:pezzopiccolo}, \eqref{eq:stimadaunoaunosuesse} and \eqref{eq:stimadaunosuesseainf}, 
for any $s\in(0,s_0)$ we obtain
\begin{equation}
\label{eq:limsupfinale}
\begin{split}
sP^\gamma_s(E;\Omega)\le &sP^\gamma_{s_0}(E;\Omega)\\
&+\frac{1}{(2\pi)^N}\frac{2}{(1-e^{-2})^{N/2}}\left(1-s^{s/2}\right)L_{f}(E\cap\Omega,E^c\cap\Omega)\\
&+\frac{1}{(2\pi)^N}\frac{2}{(1-e^{-2})^{N/2}}\left(1-s^{s/2}\right)L_{f}(E\cap\Omega,E^c\cap\Omega^c)\\
&+\frac{1}{(2\pi)^N}\frac{2}{(1-e^{-2})^{N/2}}\left(1-s^{s/2}\right)L_{f}(E\cap\Omega^c,E^c\cap\Omega)\\
&+\frac{s^{s/2}}{(2\pi)^N}\frac{2}{(1-e^{-\frac 2s})^{N/2}}L_{g_s}(E\cap\Omega,E^c\cap\Omega)\\
&+\frac{s^{s/2}}{(2\pi)^N}\frac{2}{(1-e^{-\frac 2s})^{N/2}}L_{g_s}(E\cap\Omega,E^c\cap\Omega^c)\\
&+\frac{s^{s/2}}{(2\pi)^N}\frac{2}{(1-e^{-\frac 2s})^{N/2}}L_{g_s}(E\cap\Omega^c,E^c\cap\Omega),
\end{split}
\end{equation}
where for $A,B$ measurable and disjoint sets and for $0\leq h\in L^1(A\times B)$ we have used the notation 
$$
L_h(A,B)=\int_Adx\int_Bh(x,y)dy,
$$
with $f(x,y):=\exp\left(-\frac{|x|^2+|y|^2}{2}\frac{1}{1+e^{-1}}\right)$ and $g_s(x,y):=\exp\left(-\frac{|x|^2+|y|^2}{2}\frac{1}{1+e^{-\frac{1}{s}}}\right)$.
To conclude, passing to the $\limsup$ as $s\to 0^+$ in \eqref{eq:limsupfinale} it is easily seen that the first four terms in the right hand-side in \eqref{eq:limsupfinale} vanish, and, using the dominated convergence Theorem, the last three ones approach exactly the right-hand side in \eqref{eq:claimlimsup}.
\end{proof}

To complete the asymptotic estimate, we need an estimate from below for the liminf.

\begin{proposition}\label{prop:liminf}
Let $\Omega\subset\R^N$ be an open connected set with Lipschitz boundary. 
Then for any measurable set $E\subset\R^N$ it holds
\begin{equation}
\label{eq:claimliminf}
\liminf_{s\to 0^+}s P^\gamma_s(E;\Omega)\ge 2
\bigl[\gamma(E)\gamma(\Omega\setminus E)+\gamma(E\cap\Omega)\gamma(E^c\cap\Omega^c)\bigr].
\end{equation}
\end{proposition}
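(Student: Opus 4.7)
The plan is to reduce the liminf bound to a pointwise asymptotic inequality for the kernel, which is then transferred to the integral by Fatou's lemma. Writing
$$sP^\gamma_s(E;\Omega)=\int_{\R^N}\int_{\R^N}sK_s(x,y)\,\Phi_E(x,y)\,d\gamma(x)\,d\gamma(y),$$
with
$$\Phi_E(x,y):=\chi_{E\cap\Omega}(x)\chi_{E^c\cap\Omega}(y)+\chi_{E\cap\Omega}(x)\chi_{E^c\cap\Omega^c}(y)+\chi_{E\cap\Omega^c}(x)\chi_{E^c\cap\Omega}(y),$$
the weight $\Phi_E$ is nonnegative and $s$-independent, so Fatou's lemma applies directly to $sK_s\Phi_E$ and the whole estimate is driven by a single lower bound on $sK_s$.

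\textbf{Pointwise lower bound on the kernel.} The heart of the argument is the claim
$$\liminf_{s\to 0^+}sK_s(x,y)\geq 2\qquad\text{for every }x\neq y.$$
Inspecting the definition of $M_t(x,y)$ one sees $M_t(x,y)\to 1$ as $t\to\infty$, since $e^{-t}\to 0$ and $1-e^{-2t}\to 1$. Fixing $\eps>0$, I choose $R=R(\eps,x,y)$ so that $M_t(x,y)\geq 1-\eps$ for all $t\geq R$. Discarding the (nonnegative) contribution on $(0,R)$ yields
$$sK_s(x,y)\geq s\int_R^\infty\frac{M_t(x,y)}{t^{s/2+1}}\,dt\geq (1-\eps)\,s\int_R^\infty\frac{dt}{t^{s/2+1}}=2(1-\eps)R^{-s/2}.$$
Letting $s\to 0^+$ and then $\eps\to 0^+$ establishes the claim.

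\textbf{Conclusion via Fatou and algebra.} Applying Fatou's lemma to the above double integral gives
\begin{equation*}
\liminf_{s\to 0^+}sP^\gamma_s(E;\Omega)\geq 2\int_{\R^N}\int_{\R^N}\Phi_E\,d\gamma(x)\,d\gamma(y)
=2\bigl[\gamma(E\cap\Omega)\gamma(E^c\cap\Omega)+\gamma(E\cap\Omega)\gamma(E^c\cap\Omega^c)+\gamma(E\cap\Omega^c)\gamma(E^c\cap\Omega)\bigr].
\end{equation*}
Splitting $E=(E\cap\Omega)\sqcup(E\cap\Omega^c)$ produces the identity
$$\gamma(E)\gamma(\Omega\setminus E)=\gamma(E\cap\Omega)\gamma(E^c\cap\Omega)+\gamma(E\cap\Omega^c)\gamma(E^c\cap\Omega),$$
which rearranges the right-hand side into the claimed $2[\gamma(E)\gamma(\Omega\setminus E)+\gamma(E\cap\Omega)\gamma(E^c\cap\Omega^c)]$ of \eqref{eq:claimliminf}. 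Note that no integrability hypothesis on $E$ is needed, consistently with the statement.

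\textbf{Main obstacle.} The only delicate step is capturing the sharp constant $2$ in the pointwise asymptotic: the small-$t$ part is harmless (it is dominated by $sK_{s_0}(x,y)$, which vanishes for $x\neq y$), but one must extract the constant from the large-$t$ tail, where the combination $s\int_R^\infty t^{-s/2-1}dt=2R^{-s/2}\to 2$ matches the limiting value $M_t\to 1$. Everything else is a straightforward Fatou and bookkeeping argument.
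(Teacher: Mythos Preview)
Your argument is correct and is in fact cleaner than the paper's own proof. Both proofs rest on the same asymptotic fact, namely that $M_t(x,y)\to 1$ as $t\to\infty$ together with $s\int_R^\infty t^{-s/2-1}\,dt=2R^{-s/2}\to 2$, but they package it differently. The paper fixes $\delta>0$, chooses a large ball $B_R$ capturing almost all the mass of the four relevant pieces of $E$ and $\Omega$, and then derives a \emph{uniform} lower bound $K_s(x,y)\ge \frac{2}{s}\,s^{s/2}\exp\bigl(-\tfrac{2e^{-2/s}}{1-e^{-2/s}}R^2\bigr)$ valid for all $x,y\in B_R$ by integrating only over $t\in(1/s,\infty)$; after restricting the perimeter integrals to $B_R$, letting $s\to 0^+$ and then $\delta\to 0^+$ gives \eqref{eq:claimliminf}. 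You instead prove the \emph{pointwise} bound $\liminf_{s\to 0^+}sK_s(x,y)\ge 2$ and invoke Fatou's lemma on the nonnegative integrand $sK_s\Phi_E$, which absorbs the truncation-and-$\delta$ bookkeeping into a single line. Your route is shorter and avoids the explicit choice of cutoff $1/s$ and of the ball $B_R$; the paper's route, on the other hand, yields an explicit quantitative lower bound at each fixed $s$, which can be useful if one wants rates. The final algebraic regrouping $\gamma(E)\gamma(\Omega\setminus E)=\gamma(E\cap\Omega)\gamma(E^c\cap\Omega)+\gamma(E\cap\Omega^c)\gamma(E^c\cap\Omega)$ is identical in both.
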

\begin{proof}
Let $\delta>0$ and let $R>0$ be such that
\begin{equation}\label{eq:sceltaR}
\begin{split}
\gamma\left((E\cap\Omega)\cap B_R(0)\right)\ge\gamma\left(E\cap\Omega\right)-\delta,&\quad\gamma\left((E^c\cap\Omega)\cap B_R(0)\right)\ge\gamma\left(E^c\cap\Omega\right)-\delta,\\
\gamma\left((E\cap\Omega^c)\cap B_R(0)\right)\ge\gamma\left(E\cap\Omega^c\right)-\delta,&\quad\gamma\left((E^c\cap\Omega^c)\cap B_R(0)\right)\ge\gamma\left(E^c\cap\Omega^c\right)-\delta.
\end{split}
\end{equation}
For any $x,y\in B_R(0)$ it holds
\begin{equation}\label{eq:lowestexp}
\begin{split}
\exp(\phi_t(x,y))\ge\exp\left(-\frac{e^{-2t}|x-y|^2}{2(1-e^{-2t})}\right)\ge\exp\left(-\frac{2e^{-2t}}{1-e^{-2t}}R^2\right),
\end{split}
\end{equation}
where $\phi_t$ is as in \eqref{eq:stimapezzogrande} and we used that $|x-y|^2\le4R^2$. Since
$$\frac{1}{(1-e^{-2t})^{N/2}}>1$$
and the map
$$t\mapsto \exp\left(-\frac{2e^{-2t}}{1-e^{-2t}}R^2\right)$$
is increasing in $(0,+\infty)$ and
by \eqref{eq:lowestexp} we get, for any $x,y\in B_R(0)$,
\begin{equation}\label{eq:lowestker}
\begin{split}
K_s(x,y)&\ge\int_{1/s}^{\infty}\frac{M_t(x,y)}{t^{\frac s2+1}}dt
\\
&\ge\int_{1/s}^{\infty}\frac{1}{t^{\frac s2+1}(1-e^{-2t})^{N/2}}
\exp\left(-\frac{2e^{-2t}}{1-e^{-2t}}R^2\right)dt
\\
&\ge\exp\left(-\frac{2e^{-2/s}}{1-e^{-2/s}}R^2\right)\int_{1/s}^{\infty}\frac{dt}{t^{\frac s2+1}}
=\frac{2}{s}\exp\left(-\frac{2e^{-2/s}}{1-e^{-2/s}}R^2\right) s^{s/2}.
\end{split}
\end{equation}
We can now estimate from below $sP^\gamma_s(E;\Omega)$
\begingroup
\allowdisplaybreaks
\begin{align*}
sP^\gamma_s(E;\Omega)\geq &\, 
s\int_{(E\cap\Omega)\cap B_R(0)}d\gamma(x)\int_{(E^c\cap\Omega)\cap B_R(0)}K_s(x,y)d\gamma(y) 
\\
&+s\int_{(E\cap\Omega)\cap B_R(0)}d\gamma(x)\int_{(E^c\cap\Omega^c)\cap B_R(0)}K_s(x,y)d\gamma(y)
\\
&+s\int_{(E\cap\Omega^c)\cap B_R(0)}d\gamma(x)\int_{(E^c\cap\Omega)\cap B_R(0)}K_s(x,y)d\gamma(y)
\\
\ge\, & 2\exp\left(-\frac{2e^{-2/s}}{1-e^{-2/s}}R^2\right) s^{s/2}
\bigg[\gamma\left((E\cap\Omega)\cap B_R(0)\right)\gamma\left((E^c\cap\Omega)\cap B_R(0)\right)
\\
&+\gamma\left((E\cap\Omega)\cap B_R(0)\right)\gamma\left((E^c\cap\Omega^c)\cap B_R(0)\right)
\\
&+\gamma\left((E\cap\Omega^c)\cap B_R(0)\right)\gamma\left((E^c\cap\Omega)\cap B_R(0)\right)\bigg]
\\
\geq\, &2\exp\left(-\frac{2e^{-2/s}}{1-e^{-2/s}}R^2\right) s^{s/2}
\bigg[\left(\gamma\left(E\cap\Omega\right)-\delta\right)\left(\gamma\left(E^c\cap\Omega\right)-\delta\right)
\\
&+\left(\gamma\left(E\cap\Omega\right)-\delta\right)\left(\gamma\left(E^c\cap\Omega^c\right)-\delta\right)
\\
&+\left(\gamma\left(E\cap\Omega^c\right)-\delta\right)
\left(\gamma\left(E^c\cap\Omega\right)-\delta\right)\bigg]
\\
=\, & 2\exp\left(-\frac{2e^{-2/s}}{1-e^{-2/s}}R^2\right) s^{s/2}
\\
&\cdot\left[\gamma(E)\gamma(\Omega\setminus E)+\gamma(E\cap\Omega)\gamma(E^c\cap\Omega^c)+3\delta^2-(1+\gamma(\Omega))\delta\right]
\\
\ge &\, 2 \exp\left(-\frac{2e^{-2/s}}{1-e^{-2/s}}R^2\right) s^{s/2}
\\
&\cdot\left[\gamma(E)\gamma(\Omega\setminus E)+
\gamma(E\cap\Omega)\gamma(E^c\cap\Omega^c)+3\delta^2-2\delta\right].
\end{align*}
\endgroup
By letting $s\to 0^+$ we obtain
$$\liminf_{s\to 0^+}sP^\gamma_s(E;\Omega)\ge 2\left[\gamma(E)\gamma(\Omega\setminus E)+\gamma(E\cap\Omega)\gamma(E^c\cap\Omega^c)+3\delta^2-2\delta\right],$$
thus we get \eqref{eq:claimliminf} in view of the arbitrariness of $\delta>0$.
\end{proof}

\begin{proof}[Proof of the Main Theorem]
It is an immediate consequence of Proposition \ref{prop:limsup} and Proposition \ref{prop:liminf}.
\end{proof}

In the proof of our Main Theorem, the hypothesis $P^\gamma_{s_0}(E;\Omega)<+\infty$ for some $s_0\in(0,1)$ 
is crucial (it is required to prove Proposition \ref{prop:limsup}). Adapting \cite[Example 2.10]{DiFiPaVa}, we show that there are measurable sets that do not satisfy that requirement.

\begin{example}[A measurable set with $P^\gamma_{s}(E;\Omega)=+\infty$ for any $s\in(0,1)$]
Let us consider a decreasing sequence $(\beta_k)_k\subset\R$ with $\beta_k>0$ for any $k\in\N$ such that
$$M:=\sum_{k=1}^{+\infty}\beta_k<+\infty$$
but
$$\sum_{k=1}^{+\infty}\beta_k^{1-s}=+\infty$$
(in \cite[Example 2.10]{DiFiPaVa} the authors suggest the possible choice $\beta_1=\frac{1}{\log^2 2}$ and $\beta_k=\frac{1}{k\log^2 k}$ for any $k\ge 2$).
Let us define
$$
\Omega:=(0,M)\subset\R,\quad \sigma_m:=\sum_{k=1}^{m}\beta_k,\quad I_m:=(\sigma_m,\sigma_{m+1}),\quad E:=\bigcup_{j=1}^{+\infty}I_{2j}.
$$
We claim that $P^\gamma_{s}(E;\Omega)=+\infty$ for any $s\in(0,1)$. By recalling that $E\subset\Omega$ it holds
\begin{align*}
P^\gamma_{s}(E;\Omega)=P^\gamma_{s}(E)&\ge C_{1,s}\sum_{j=1}^{+\infty}\int_{\sigma_{2j}}^{\sigma_{2j+1}}d\gamma(x)\int_{\sigma_{2j+1}}^{\sigma_{2j+2}}\frac{d\gamma(y)}{|x-y|^{1+s}}\\
&\ge \frac{1}{2\pi}\frac{e^{-M^2}}{s(1-s)}\sum_{j=1}^{+\infty}\left[(\sigma_{2j+2}-\sigma_{2j+1})^{1-s}+(\sigma_{2j+1}-\sigma_{2j})^{1-s}-(\sigma_{2j+2}-\sigma_{2j})^{1-s}\right]\\
&=\frac{1}{2\pi}\frac{e^{-M^2}}{s(1-s)}\sum_{j=1}^{+\infty}\left[\beta_{2j+2}^{1-s}+\beta_{2j+1}^{1-s}-(\beta_{2j+2}+\beta_{2j+1})^{1-s}\right]
\end{align*}
where in the first inequality we used \eqref{eq:fractkernellowerbound}, while in the second inequality we used that $C_{1,s}\ge 1$, the boundedness from below of the Gaussian weights in $(\sigma_{2j},\sigma_{2j+1})\times(\sigma_{2j+1},\sigma_{2j+2})$ for any $j\ge 1$ and that for $a<b\le c<d$
$$
\int_{a}^{b}dx\int_{c}^{d}\frac{dy}{|x-y|^{1+s}}=\frac{1}{s(1-s)}
\left[(c-a)^{1-s}+(d-b)^{1-s}-(c-b)^{1-s}-(d-a)^{1-s}\right].
$$
Since the map $t\mapsto(1+t)^{1-s}$ is concave in $[0,1)$, it holds
$$
1+t^{1-s}-(1+t)^{1-s}\ge st^{1-s}.
$$
By the choice $t=\frac{\beta_{2j+2}}{\beta_{2j+1}}$ we get
$$
\beta_{2j+2}^{1-s}+\beta_{2j+1}^{1-s}-(\beta_{2j+2}+\beta_{2j+1})^{1-s}\ge s\beta_{2j+2}^{1-s}
$$
and so
$$
P^\gamma_{s}(E;\Omega)\ge\frac{1}{2\pi}\frac{e^{-M^2}}{1-s}\sum_{j=1}^{+\infty}\beta_{2j+2}^{1-s}=+\infty,
$$
concluding the proof of the claim.
\end{example}
Now we state some properties of the set function $\mu$.
\begin{proposition}\label{prop:mu}
	$\mu$ is subadditive on $\mathcal{E}$, i.e. $\mu(E\cup F)\le\mu(E)+\mu(F)$ for any $E, F\in\mathcal{E}$; $\mu$ is not monotone with respect to inclusions.
\end{proposition}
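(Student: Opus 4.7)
The plan is to deduce subadditivity of $\mu$ from the corresponding inequality at the level of the $s$-perimeter and then pass to the limit $s\to 0^+$ via the Main Theorem, while for non-monotonicity I would exhibit an explicit counterexample exploiting the fact that $\mu(\R^N)=0$.

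The starting point for subadditivity is the pointwise inequality
\[
|\chi_{E\cup F}(x)-\chi_{E\cup F}(y)|^{2}\le |\chi_{E}(x)-\chi_{E}(y)|^{2}+|\chi_{F}(x)-\chi_{F}(y)|^{2},
\]
verified by a short case analysis: the left-hand side equals $1$ only when exactly one of $x,y$ lies in $E\cup F$, and then at least one of the two summands on the right is also $1$. Rewriting
\[
P^\gamma_s(E;\Omega)=\tfrac{1}{2}\int_{\Omega}\!\int_{\Omega}|\chi_E(x)-\chi_E(y)|^{2}K_s(x,y)\,d\gamma(x)d\gamma(y)+\int_{\Omega}\!\int_{\Omega^c}|\chi_E(x)-\chi_E(y)|^{2}K_s(x,y)\,d\gamma(x)d\gamma(y)
\]
and integrating, one obtains at once $P^\gamma_s(E\cup F;\Omega)\le P^\gamma_s(E;\Omega)+P^\gamma_s(F;\Omega)$. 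To invoke the Main Theorem on $E\cup F$ I need $E\cup F\in\mathcal{E}$; picking any $s_0\in(0,1)$ strictly smaller than the witnesses $s_E,s_F$ for $E,F\in\mathcal{E}$, the splitting of $K_s$ used in the proof of Proposition~\ref{prop:limsup} implies that $P^\gamma_{s_0}(E;\Omega)$ and $P^\gamma_{s_0}(F;\Omega)$ are both finite, so the just-established $s$-perimeter subadditivity yields $P^\gamma_{s_0}(E\cup F;\Omega)<\infty$. Multiplying the $s$-perimeter inequality by $s$ and passing to the limit completes the subadditivity argument.

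For the failure of monotonicity I distinguish two cases. If $\Omega\subsetneq\R^N$, I take $E=\Omega$ and $F=\R^N$: from the explicit formula \eqref{eq:claimlim} one computes
\[
\mu(\Omega)=2\gamma(\Omega)\gamma(\Omega^c)>0,\qquad \mu(\R^N)=0,
\]
so $E\subsetneq F$ but $\mu(E)>\mu(F)$. Membership $\R^N\in\mathcal{E}$ is trivial, while $\Omega\in\mathcal{E}$ follows from the upper kernel estimate \eqref{eq:upperadialestimate} together with the Lipschitz regularity of $\partial\Omega$, which keep $P^\gamma_s(\Omega;\Omega)=L^\gamma_s(\Omega,\Omega^c)$ finite for $s$ small. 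In the borderline case $\Omega=\R^N$ the same role is played by the half-space $E=\{x_1>0\}$, for which $\mu(E)=\tfrac{1}{2}>0=\mu(\R^N)$.

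The only non-trivial technical point I expect is the passage $E,F\in\mathcal{E}\Rightarrow E\cup F\in\mathcal{E}$: the definition of $\mathcal{E}$ supplies the witnesses $s_E,s_F$ independently, and one has to use the implicit fact (visible from the decomposition of $K_s$ in Proposition~\ref{prop:limsup}) that finiteness of $P^\gamma_{s_0}(\cdot;\Omega)$ propagates to every smaller $s$. Everything else reduces to a case-by-case verification of a pointwise inequality and a one-line evaluation of the closed-form expression for $\mu$.
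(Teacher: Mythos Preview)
Your proposal is correct and follows essentially the same route as the paper. For subadditivity the paper simply cites \cite[Proposition 2.1]{DiFiPaVa}, and your argument via the pointwise inequality for characteristic functions is exactly the content of that reference; you also make explicit the step $E,F\in\mathcal{E}\Rightarrow E\cup F\in\mathcal{E}$, which the paper leaves implicit. For non-monotonicity both proofs compare a set with $\mu>0$ against $F=\R^N$ (where $\mu(\R^N)=0$); the only difference is the choice of the small set: the paper takes a ball contained in $\Omega$ or a halfspace meeting $\Omega$, whereas you take $E=\Omega$ itself. Your choice works but carries the extra burden of verifying $\Omega\in\mathcal{E}$, i.e.\ $L^\gamma_s(\Omega,\Omega^c)<\infty$, which for a general (possibly unbounded) Lipschitz domain is true but not quite a one-liner; the paper's ball sidesteps this entirely since $P^\gamma_s$ of a smooth bounded set is finite by a direct estimate.
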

\begin{proof}
	To show the subadditivity, we proceed as in the proof of \cite[Proposition 2.1]{DiFiPaVa}; to show the lack of monotonicity, it is sufficient to choose as $E$ a small ball contained in $\Omega$ or a halfspace such that $\mathcal{H}^{N-1}(\partial E\cap\Omega)>0$ and $F=\R^N$.
\end{proof}

Notice that $\mu$ is not additive. Indeed, if $\Omega=\R^N$, then, for any pair of measurable disjoint sets $A,B\subset\R^N$
\begin{align*}
\mu(A\cup B)&=2\gamma(A\cup B)\gamma(A^c\cap B^c)=2\left(\gamma(A)+\gamma(B)\right)\left(1-\gamma(A)-\gamma(B)\right)\\
&=2\gamma(A)\left(1-\gamma(A)\right)+2\gamma(B)\left(1-\gamma(B)\right)-4\gamma(A)\gamma(B)\\
&=2\gamma(A)\gamma(A^c)+2\gamma(B)\gamma(B^c)-4\gamma(A)\gamma(B)=\mu(A)+\mu(B)-4\gamma(A)\gamma(B).
\end{align*}
Otherwise, if $\Omega\neq\R^N$, we proceed as in the proof \cite[Proposition 2.3]{DiFiPaVa} by using the following result.

\begin{lemma}
	\label{lem:posinter}
	For any $A$, $B\subset\R^N$ measurable disjoint sets there exists $C=C(A,B)>0$ such that
	$$
	sL^\gamma_s(A,B)\ge C,
	$$
	for any $s\in(0,1)$.
\end{lemma}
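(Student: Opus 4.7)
The plan is to reuse the pointwise lower bound on $K_s$ that already appears in the proof of Proposition~\ref{prop:liminf}; multiplying by $s$ cancels the $1/s$ singularity, and what is left is a continuous positive function of $s$ on $(0,1)$.

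First I would assume $\gamma(A),\gamma(B)>0$, otherwise the interaction $L^\gamma_s(A,B)$ vanishes and there is nothing to prove. Then, exactly as in the opening step of Proposition~\ref{prop:liminf}, I pick $R=R(A,B)>0$ so large that $\gamma(A\cap B_R(0))\ge\tfrac12\gamma(A)$ and $\gamma(B\cap B_R(0))\ge\tfrac12\gamma(B)$. Applying \eqref{eq:lowestker} on $B_R(0)\times B_R(0)$ yields, for every $s\in(0,1)$ and every $x,y\in B_R(0)$,
$$s\,K_s(x,y)\ge 2\,s^{s/2}\exp\left(-\frac{2e^{-2/s}}{1-e^{-2/s}}R^2\right)=:\Phi_R(s).$$

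The main (minor) point is that $\Phi_R$ is bounded below by a positive constant uniformly in $s\in(0,1)$. This is a routine continuity check: the map $s\mapsto s^{s/2}$ extends continuously to $[0,1]$ with value $1$ at $s=0^+$ (since $s\log s\to 0$) and $1$ at $s=1$; similarly, the exponential factor extends continuously to $[0,1]$ with value $1$ at $s=0^+$ and $\exp\bigl(-\tfrac{2e^{-2}}{1-e^{-2}}R^2\bigr)>0$ at $s=1^-$. Hence $\Phi_R$ attains a positive minimum $c_0=c_0(R)>0$ on $(0,1)$.

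Integrating the pointwise estimate $sK_s(x,y)\ge c_0$ over $(A\cap B_R(0))\times(B\cap B_R(0))$ then gives
$$s\,L^\gamma_s(A,B)\ge c_0\,\gamma(A\cap B_R(0))\,\gamma(B\cap B_R(0))\ge \frac{c_0}{4}\gamma(A)\gamma(B)=:C(A,B)>0,$$
which is the claim. I do not anticipate any substantial obstacle; the whole argument amounts to recycling the lower bound from the proof of Proposition~\ref{prop:liminf} and checking that the $s$-dependent prefactor stays bounded away from zero on the whole interval $(0,1)$, not just as $s\to 0^+$.
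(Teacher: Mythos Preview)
Your proof is correct and follows essentially the same route as the paper: restrict to a large ball, bound the Mehler kernel from below for large times, and integrate. The only minor difference is that the paper integrates over $[1,\infty)$ instead of $[1/s,\infty)$; since $\int_1^\infty t^{-s/2-1}\,dt=2/s$ and the exponential factor at $t=1$ is already $s$-independent, this yields the explicit constant $C=2\exp\bigl(-\tfrac{2R^2}{e^2-1}\bigr)\gamma(A\cap B_R)\gamma(B\cap B_R)$ directly, without your continuity/compactness step for $\Phi_R$.
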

\begin{proof}
We firstly assume that $A$, $B$ are bounded and fix $R>0$ sufficiently large such that $A,B\subset B_R$. 
We have 
\begin{equation}
\begin{split}
sL^\gamma_s(A,B)&\ge s\int_Ad\gamma(x)\int_Bd\gamma(y)\int_1^\infty\frac{M_t(x,y)}{t^{\frac s2+1}}dt \\
&\ge s\int_Ad\gamma(x)\int_Bd\gamma(y)\int_1^\infty\exp\left(-\frac{e^{-2t}(|x|^2+|y|^2)-2e^{-t}(x,y)}{2(1-e^{-2t})}\right)\frac{dt}{t^{\frac s2+1}} \\
&\ge s\int_Ad\gamma(x)\int_Bd\gamma(y)\int_1^\infty\exp\left(-\frac{e^{-2t}|x-y|^2}{2(1-e^{-2t})}\right)\frac{dt}{t^{\frac s2+1}} \\
&\ge s\exp\left(-\frac{2R^2}{(e^2-1)}\right)\int_Ad\gamma(x)\int_Bd\gamma(y)\int_1^\infty\frac{dt}{t^{\frac s2+1}} \\
&=2\exp\left(-\frac{2R^2}{(e^2-1)}\right)\gamma(A)\gamma(B)=:C(A,B).
\end{split}
\end{equation}
If $A$, $B$ are unbounded we simply have 
$$
sL^\gamma_s(A,B)\ge sL^\gamma_s(A\cap B_R,B\cap B_R)\ge C
$$
for any $s\in(0,1)$ and $R>0$.
	\end{proof}

\begin{remark}
	We notice that, even if we add in Lemma \ref{lem:posinter} the hypothesis of strictly positive distance between $A$ and $B$, the result is left unchanged.
\end{remark}

\section{Final remarks}
\label{sec:finalremarks}
We conclude by studying the asymptotics for $s\to 0^+$ even for the fractional perimeter defined in \cite{DL}
\begin{align}
\label{eq:derosalmanna}
\mathcal{J}^\lambda_s(E;\Omega):=&\int_{E\cap\Omega}d\lambda(x)
\int_{E^c\cap\Omega}\frac{d\lambda(y)}{|x-y|^{N+s}} 
\\ \nonumber
&+\int_{E\cap\Omega}d\lambda(x)\int_{E^c\cap\Omega^c}\frac{d\lambda(y)}{|x-y|^{N+s}}
+\int_{E\cap\Omega^c}d\lambda(x)\int_{E^c\cap\Omega}\frac{d\lambda(y)}{|x-y|^{N+s}},
\end{align}
We recall that the functional \eqref{eq:derosalmanna} is linked to \eqref{eq:fracgaussperimeter} by the fact that they have the same $\Gamma$-limit by multiplying by $1-s$ and letting $s\to 1^-$ (\cite[Main Theorem]{CaCiLaPa2}); this depends on the fact that $K_s(x,y)\gamma(x)\gamma(y)$ and $\frac{\lambda(x)\lambda(y)}{|x-y|^{N+s}}$ approach the Dirac delta in the same way, up to a multiplicative constant, when $|x-y|\to 0$. Nevertheless, definition \eqref{eq:derosalmanna} is somehow unnatural, because it is not linked to functional calculus as \eqref{eq:fracgaussperimeter}. Therefore, we can say that \eqref{eq:fracgaussperimeter} is the fractional counterpart of the Gaussian perimeter \eqref{eq:gaussper}, and we can refer to it as  ``Fractional Gaussian perimeter'', while \eqref{eq:derosalmanna} is a weighted version of the fractional perimeter defined in \cite{CafRoqSav}, and we can refer to it as  ``Gaussian fractional perimeter''. As already said in Section \ref{sec:intro} for the Gaussian fractional perimeter the asymptotics for $s\to 0^+$ is not meaningful. Indeed the following Proposition holds.
\begin{proposition}
For any measurable set $E$ such that $\mathcal{J}^\lambda_{s_0}(E;\Omega)<\infty$ for some $s_0\in(0,1)$ we have 
$$
\lim_{s\to 0^+}s\mathcal{J}^\lambda_s(E;\Omega)=0.
$$
\end{proposition}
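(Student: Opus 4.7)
The plan is to split the singular kernel $|x-y|^{-N-s}$ according to the two regions $\{|x-y|\le 1\}$ and $\{|x-y|>1\}$, and show that after multiplying by $s$ each contribution vanishes as $s\to 0^+$. This mirrors the classical Maz'ya-Shaposhnikova splitting, but here the weight $\lambda$ is already a finite measure and the Euclidean kernel has fast decay at infinity, so the ``leading term'' degenerates.

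First, on $\{|x-y|\le 1\}$ and for $s<s_0$, the elementary inequality $|x-y|^{s_0-s}\le 1$ gives $|x-y|^{-N-s}\le |x-y|^{-N-s_0}$. Restricting each of the three interaction integrals defining $\mathcal{J}^\lambda_s(E;\Omega)$ to this diagonal neighbourhood and applying this pointwise bound yields a quantity dominated by $\mathcal{J}^\lambda_{s_0}(E;\Omega)$, which is finite by hypothesis. Hence the near-diagonal part of $s\mathcal{J}^\lambda_s(E;\Omega)$ is at most $s\,\mathcal{J}^\lambda_{s_0}(E;\Omega)$, which tends to $0$.

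On the complementary region $\{|x-y|>1\}$ the kernel is pointwise bounded by $1$, so the corresponding contribution to $\mathcal{J}^\lambda_s(E;\Omega)$ is at most $3\lambda(\R^N)^2=3\cdot 2^N$ (recall that $\lambda$ has total mass $2^{N/2}$, which follows from a direct Gaussian computation). Thus the far part of $s\mathcal{J}^\lambda_s(E;\Omega)$ is bounded by $3\cdot 2^N\,s$, which likewise tends to $0$.

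Adding the two bounds proves the claim. The argument presents essentially no obstacle: the decisive feature distinguishing this case from the fractional Gaussian perimeter $P^\gamma_s$ is that the Euclidean kernel $|x-y|^{-N-s}$ decays at infinity and $\lambda$ is a finite measure, so the long-range interaction is already uniformly bounded in $s$ and no Maz'ya-Shaposhnikova-type leading term of order $s^{-1}$ survives after the $s$-rescaling. In contrast, the Mehler kernel $K_s$ carries a $2/s$ factor at large times, which is precisely what produced the nontrivial limit \eqref{eq:claimlim} in the Main Theorem.
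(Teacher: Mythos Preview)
Your proof is correct and follows essentially the same route as the paper: both split the kernel at $|x-y|=1$, bound the near-diagonal part by $\mathcal{J}^\lambda_{s_0}(E;\Omega)$ via monotonicity of $r^{-N-s}$ in $s$ for $r\le 1$, and bound the far part by a finite product of $\lambda$-measures since the kernel is at most $1$ there. The paper phrases the argument for a generic interaction functional $L^\lambda_s(A,B)$ and then applies it to the three relevant pairs, while you work directly with $\mathcal{J}^\lambda_s$, but the content is identical.
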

\begin{proof}
	Let $A$, $B$ be measurable and disjoint sets such that $L^\lambda_{s_0}(A,B)<\infty$ for some 
	$s_0\in(0,1)$, where
	$$
	L^\lambda_{\sigma}(A,B):=\int_Ad\lambda(x)\int_B\frac{d\lambda(y)}{|x-y|^{N+\sigma}}.
	$$ 
	Then, for any $s\in(0,s_0)$ we have 
	\begin{equation}
	\begin{split}
	L^\lambda_s(A,B)&=\iint_{(A\times B)\cap\{|x-y|\ge 1\}}\frac{d\lambda(y)}{|x-y|^{N+s}}d\lambda(x)+\iint_{(A\times B)\cap\{|x-y|< 1\}}\frac{d\lambda(y)}{|x-y|^{N+s}}d\lambda(x) \\
	&\le\lambda(A)\lambda(B)+\iint_{(A\times B)\cap\{|x-y|< 1\}}\frac{d\lambda(y)}{|x-y|^{N+s_0}}d\lambda(x)  \\
	&\le\lambda(A)\lambda(B)+L^\lambda_{s_0}(A,B)<\infty.	
	\end{split}
	\end{equation}
	Therefore
	\begin{equation}
	\label{eq:nulllimit}
	\lim_{s\to 0^+}sL^\lambda_s(A,B)=0.
	\end{equation}
	By applying \eqref{eq:nulllimit} to the couples of sets $(E\cap\Omega,E^c\cap\Omega)$, $(E\cap\Omega,E^c\cap\Omega^c)$, $(E\cap\Omega^c,E^c\cap\Omega)$, we completely prove the claim.
\end{proof}

\begin{remark}
	We notice that even in this case we cannot drop the condition $\mathcal{J}^\lambda_{s_0}(E;\Omega)<\infty$ for some $s_0\in(0,1)$. Indeed \cite[Example 2.10]{DiFiPaVa} still works with
	$$\mathcal{J}^\lambda_{s}(E;\Omega)\ge\frac{1}{2\pi}\frac{e^{-\frac{M^2}{2}}}{1-s}\sum_{j=1}^{+\infty}\beta_{2j+2}^{1-s}=+\infty,$$
	for any $s\in(0,1)$.
\end{remark}
\paragraph*{\bf Funding}\ \\
A.C. has been partially supported by the TALISMAN project Cod. ARS01-01116. S.C. has been partially supported by the ACROSS project CUPF36C18000210005. 
D.A.L. has been supported by the Academy of Finland grant 314227. 
D.P. is member of G.N.A.M.P.A. of the Italian Istituto Nazionale di Alta Matematica (INdAM) and has been partially supported by the PRIN 2015 MIUR project 2015233N54.

\begin{bibdiv}
	\begin{biblist}

		\bib{AmDeMa}{article}{
			author={Ambrosio, L.},
			author={De Philippis, G.},
			author={Martinazzi, L.},
			title={Gamma-convergence of nonlocal perimeter functionals},
			journal={Manuscripta Math.},
			volume={134},
			date={2011},
			number={3-4},
			pages={377--403},
		}
		

\bib{BouBreMir}{article}{
	author={Bourgain, J.},
	author={Brezis, H.},
	author={Mironescu, P.},
	title={Another look at Sobolev spaces},
	journal={Optimal control and partial differential equations},
	publisher={IOS, Amsterdam},
    date={2001},
    pages={439--455},
}

		\bib{CafRoqSav}{article}{
			author={Caffarelli, L.},
			author={Roquejoffre, J.-M.},
			author={Savin, O.},
			title={Nonlocal minimal surfaces},
			journal={Comm. Pure Appl. Math.},
			volume={63},
			date={2010},
			number={9},
			pages={1111--1144},
		}

	
		\bib{CafVal}{article}{
		author={Caffarelli, L.},
		author={Valdinoci, E.},
		title={Uniform estimates and limiting arguments for nonlocal minimal
			surfaces},
		journal={Calc. Var. Partial Differential Equations},
		volume={41},
		date={2011},
		number={1-2},
		pages={203--240},
	}

	\bib{CaCiLaPa2}{article}{
	author={Carbotti, A.},
	author={Cito, S.},
	author={La Manna, D. A.},
	author={Pallara, D.},
	title={Gamma Convergence of Gaussian Fractional Perimeter},
	journal={Submitted Paper},
	date = {2021},
	eprint={https://arxiv.org/pdf/2103.16598.pdf},
}


\bib{davila}{article}{
	author={D\'{a}vila, J.},
	title={On an open question about functions of bounded variation},
	journal={Calc. Var. Partial Differential Equations},
	volume={15},
	date={2002},
	number={4},
	pages={519--527},
}

		\bib{DL}{article}{
			author={De Rosa, A.},
			author={La Manna, D. A.},
			title={A nonlocal approximation of the Gaussian perimeter: Gamma
convergence and Isoperimetric properties
},
journal={Commun. Pure Appl. Anal.},
date = {to appear},
eprint={https://arxiv.org/pdf/2011.07544.pdf},
		}
	
	
	\bib{DiFiPaVa}{article}{
		author={Dipierro, S.},
		author={Figalli, A.},
		author={Palatucci, G.},
		author={Valdinoci, E.},
		title={Asymptotics of the $s$-perimeter as $s\searrow0$},
		journal={Discrete Contin. Dyn. Syst.},
		volume={33},
		date={2013},
		number={7},
		pages={2777--2790},
	}

	\bib{ErMaObTr}{book}{
	author={Erd\'{e}lyi, A.},
	author={Magnus, W.},
	author={Oberhettinger, F.},
	author={Tricomi, F. G.},
	title={Higher transcendental functions. Vol. II},
	note={Based on notes left by Harry Bateman;
		Reprint of the 1953 original},
	publisher={Robert E. Krieger Publishing Co., Inc., Melbourne, Fla.},
	date={1981},
	pages={xviii+396},
}

		\bib{FraSei}{article}{
	author={Frank, R. L.},
	author={Seiringer, R.},
	title={Non-linear ground state representations and sharp Hardy
		inequalities},
	journal={J. Funct. Anal.},
	volume={255},
	date={2008},
	number={12},
	pages={3407--3430},
}
	
	\bib{lombardini}{article}{
		author={Lombardini, L.},
		title={Fractional perimeters from a fractal perspective},
		journal={Adv. Nonlinear Stud.},
		volume={19},
		date={2019},
		number={1},
		pages={165--196},
	}
		
		\bib{LunMetPal}{article}{
			author={Lunardi, A.},
			author={Metafune, G.},
			author={Pallara, D.},
			title={The Ornstein-Uhlenbeck semigroup in finite dimensions},
			journal={Philos. Trans. R. Soc. Lond. Ser. A Math. Phys. Eng. Sci.},
			volume={378},
			date={2020},
		}

		\bib{MarSan}{book}{
			author={Mart\'{\i}nez Carracedo, C.},
			author={Sanz Alix, M.},
			title={The theory of fractional powers of operators},
			series={North-Holland Mathematics Studies},
			volume={187},
			publisher={North-Holland Publishing Co., Amsterdam},
			date={2001},
			pages={xii+365},
		}

\bib{MazSha}{article}{
	author={Maz\cprime ya, V.},
	author={Shaposhnikova, T.},
	title={On the Bourgain, Brezis, and Mironescu theorem concerning limiting
		embeddings of fractional Sobolev spaces},
	journal={J. Funct. Anal.},
	volume={195},
	date={2002},
	number={2},
	pages={230--238},
}
		
	
	\bib{ponce}{article}{
		author={Ponce, A. C.},
		title={A new approach to Sobolev spaces and connections to
			$\Gamma$-convergence},
		journal={Calc. Var. Partial Differential Equations},
		volume={19},
		date={2004},
		number={3},
		pages={229--255},
	}


%
%

	\end{biblist}
\end{bibdiv}
\end{document}